\newcommand{\showcomments}{yes}
\newsavebox{\commentbox}
\newcounter{ax}
\newtheorem{thm}{Theorem}[section]
\newtheorem{cor}[thm]{Corollary}
\newtheorem{conj}[thm]{Conjecture}
\newtheorem{prop}[thm]{Proposition}
\theoremstyle{definition}
\newtheorem{rem}[thm]{Remark}
\newtheorem{claim*}{Claim}
\newtheorem{question}[thm]{Question}
\newcommand{\Rmnum}[1]{\mathbf{{\expandafter\@slowromancap\romannumeral #1@}}}
\let\oldmarginpar\marginpar
\renewcommand\marginpar[1]{\-\oldmarginpar[\raggedleft\footnotesize #1]{\raggedright\footnotesize #1}}
\newcommand{\tsh}[1]{\left\{\kern-.7ex\left\{#1\right\}\kern-.7ex\right\}}
\newcounter{enumitemp}
\newcommand{\PP}{\mathcal P}
\begin{document}

\title[RACG Counterexample]{A counterexample to questions about boundaries, stability, and 
commensurability}
\author[J. Behrstock]{Jason Behrstock}
\address{Lehman College and The Graduate Center, CUNY, New York, New York, USA}
\email{jason.behrstock@lehman.cuny.edu}

\begin{abstract} We construct a family of right-angled Coxeter groups 
    which provide counter-examples to questions about the stable 
    boundary of a group, one-endedness of stable subgroups, and the 
    commensurability types of right-angled Coxeter groups.
\end{abstract}

\date{\today}

\maketitle

%\tableofcontents

\section*{Introduction}

In this short note we construct 
right-angled Coxeter groups with some interesting properties. 
These examples show that a number of questions in geometric group 
theory have more nuanced answers than originally expected. 
In particular, these examples resolve the following questions in the 
negative:

\begin{itemize}
    \item (Charney and Sisto): As is the case for right-angled Artin groups, do 
    all (non-relatively hyperbolic) 
    right-angled Coxeter group have totally disconnected 
    contracting boundary?

    \item (Taylor): Given that   
    all known quasigeodesically stable subgroups of the mapping 
    class group are virtually free, does it hold that in any  
    (non-relatively hyperbolic) group that all 
    quasigeodesically stable subgroups have more than one end?
    \item (Folk question): If a right-angled Coxeter group 
    has quadratic divergence, must it be virtually a right-angled 
    Artin group?
\end{itemize}

We describe a family of graphs, any one of which is the
presentation graph of a right-angled Coxeter group which provides a
counterexample to all three of the above questions.  We expect that 
in special cases, and perhaps in general with appropriate
modifications, there are interesting positive answers to these
questions; we hope this note will encourage the careful reader to 
formulate  
and prove 
such results.

The construction
we give was inspired by thinking about the simplicial boundary for the
Croke--Kleiner group, see
\cite[Example~5.12]{BehrstockHagen:cubulated1} and \cite{Hagen:tat}.  In the
process we give a quick introduction to a few topics of recent
interest in geometric group theory, for further details on these 
topics see also
\cite{AbbottBehrstockDurham, BehrstockDrutuMosher:thick,
Charney:RAAGsurvey, CharneySultan, Cordes:Morsesurvey, 
DurhamTaylor:stability, Tran:loxcox}.

\subsection*{Acknowledgments} Thanks to Ruth Charney, Alessandro 
Sisto, and Sam Taylor for sharing with me their interesting questions 
and to the referee for some helpful comments which led to Remark~\ref{rem:thanksref}. 
Also, thanks to Mark Hagen for many fun 
discussions relating to topics in this note.

\section{The construction}

Let $\Gamma_{n}$ be a graph with $2n$ vertices built in the following 
inductive way. Start with a pair of vertices $a_{1}, b_{1}$ with no 
edge between them. Given the graph $\Gamma_{n-1}$, obtain the graph 
$\Gamma_{n}$ by adding a new pair of vertices $a_{n},b_{n}$ to the 
graph $\Gamma_{n-1}$ and adding four new edges, one connecting each of 
$\{a_{n-1},b_{n-1}\}$ to each of  $\{a_{n},b_{n}\}$. See 
Figure~\ref{fig:eye}. 

\begin{figure}[h]
 \includegraphics[width=1.0\textwidth]{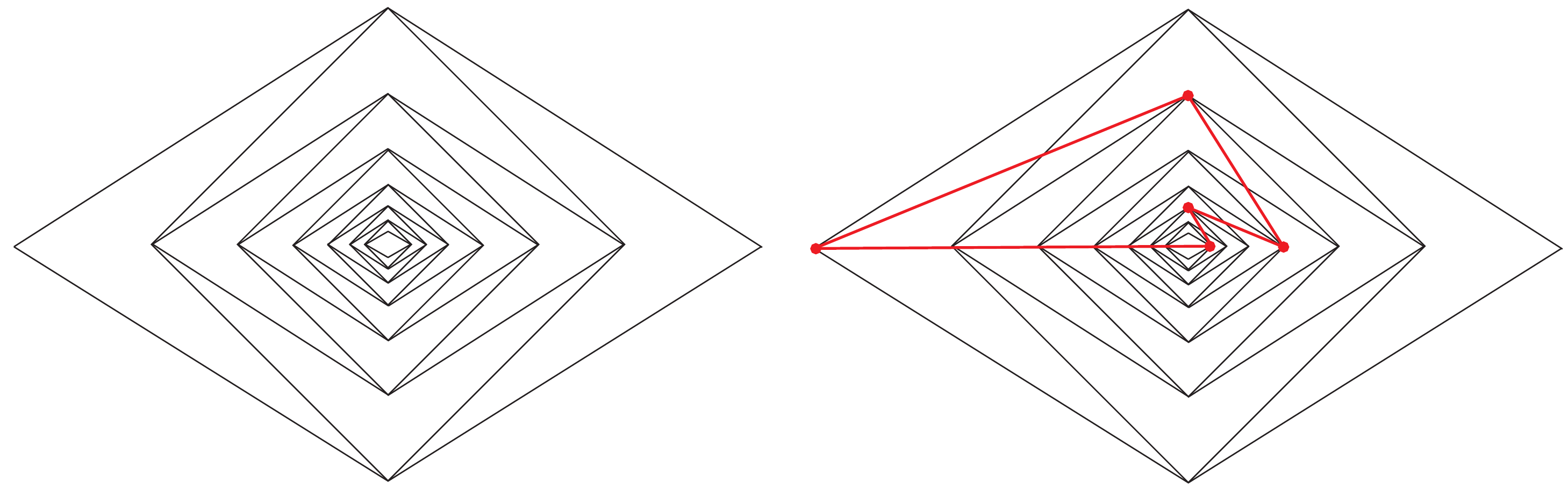}
 \caption{The graphs $\Gamma_{14}$ (left) and $\Gamma$ (right).}\label{fig:eye}
\end{figure}

Note that $\Gamma_{n}$ is a join if and only if $n\leq 3$. More 
generally, $a_{i},a_{j}$ are contained in a common join if and only  
if $|i-j|\leq 2$.

For any $m\geq 5$ choose $n$ sufficiently large so that there exists    
a set of 
points $\PP=\{p_{1},\ldots, p_{m}\}\subset \Gamma_{n}$ with the property that for 
each $1\leq i<j\leq m$ the points $p_{i}$ and $p_{j}$ 
are not contained in a common join in $\Gamma_{n}$. For example, 
in  $\Gamma_{14}$ we could choose the vertices 
$\PP=\{ a_{1},a_{4},a_{7},a_{10},a_{13}\}$. 
%$p_{1}=a_{0},p_{2}=a_{3},p_{3}=a_{6},p_{4}=a_{9},p_{5}=a_{12}$ 
For each $1\leq i< m$ 
add an edge between $p_{i}$ and $p_{i+1}$; also, add an edge between 
$p_{m}$ and $p_{1}$. Call this new graph $\Gamma$. There are many 
choices of $\Gamma$ depending on our choices of    
$n$, $m$, and $\PP$; for the following any choice will work.

Associated to any graph, one can construct the \emph{right-angled 
Coxeter group} with that presentation graph, this is the 
group whose defining presentation is given by: 
an order-two generator, for each vertex of the graph, and a 
commutation relation, between each of the generators associated 
to a pair of vertices connected by an edge.

Let $W$ denote the  
right-angled Coxeter group whose presentation graph is the $\Gamma$ 
constructed above.
In the next section, we record some key properties about the group 
$W$ and then, in the final section, apply this to the questions 
in the introduction.

\section{Properties}

\subsection{Quadratic divergence}

\begin{prop}\label{quaddiv}
    The group $W$ 
    has quadratic divergence. In particular, this group is not 
    relatively hyperbolic.
\end{prop}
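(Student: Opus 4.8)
The plan is to show that $W$ is \emph{thick of order one} in the sense of Behrstock--Drutu--Mosher \cite{BehrstockDrutuMosher:thick}; this single structural fact delivers both halves of the statement at once, since a space thick of order one has divergence at most quadratic, and a thick group is never relatively hyperbolic. The wide building blocks will be the special subgroups carried by the $4$--cycles of $\Gamma$. Concretely, for each $2\le i\le n$ the vertices $\{a_{i-1},b_{i-1},a_{i},b_{i}\}$ span an induced $4$--cycle (the complete bipartite graph between consecutive levels, with non-edges $\{a_{i-1},b_{i-1}\}$ and $\{a_{i},b_{i}\}$ as its two diagonals), so the corresponding special subgroup is isomorphic to $D_\infty\times D_\infty$, a product of two infinite groups and hence wide, i.e.\ thick of order zero.

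First I would check that the edges added in passing from $\Gamma_n$ to $\Gamma$ do not disturb this picture, and this is precisely where the defining property of $\PP$ is used. The key elementary observation is that any two vertices sharing a common neighbour already lie in a common join (the star $\{x,y\}\ast\{v\}\cong K_{1,2}$), so the hypothesis that no two points of $\PP$ lie in a common join forces the points of $\PP$ to be pairwise non-adjacent with no common neighbours; in particular each square $\{a_{i-1},b_{i-1},a_{i},b_{i}\}$ and each non-edge $\{a_i,b_i\}$ contains \emph{at most one} point of $\PP$. Consequently no added cycle-edge can fall inside any such square or inside any pair $\{a_i,b_i\}$. Hence in $\Gamma$ these $4$--cycles remain induced, the subgroups $\langle a_{i-1},b_{i-1},a_{i},b_{i}\rangle\cong D_\infty\times D_\infty$ remain isometrically embedded and wide, and consecutive squares still meet in the infinite subgroup $\langle a_i,b_i\rangle\cong D_\infty$. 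These subgroups generate $W$, and via the infinite overlaps $\langle a_i,b_i\rangle$ they are thickly chained, so $W$ is thick of order one; by \cite{BehrstockDrutuMosher:thick} its divergence is therefore at most quadratic and it is not relatively hyperbolic.

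It then remains to see that the divergence is genuinely quadratic rather than linear. For this I would verify that $\Gamma$ is not a join: every vertex of $\Gamma$ has degree at most six (the four $\Gamma_n$--edges to adjacent levels together with at most two cycle-edges), whereas a join decomposition $\Gamma=A\ast B$ would put some low-degree vertex such as $a_1$ into one part, forcing the opposite part to be small and all of its vertices to be adjacent to the remaining $\ge 2n-6$ vertices, which is impossible for large $n$. Thus $W$ is not a nontrivial direct product and is not wide, so its divergence is not linear; since $W$ acts properly cocompactly on a CAT(0) cube complex and the divergence of a CAT(0) group is either linear or at least quadratic, the divergence of $W$ is at least quadratic. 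Combined with the upper bound from thickness, it is exactly quadratic.

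Equivalently, one may package everything through the Dani--Thomas criterion for two-dimensional right-angled Coxeter groups: the graph $\Gamma$ is triangle-free (the $m\ge 5$ cycle added among the pairwise non-adjacent, common-neighbour-free points of $\PP$ creates no triangle), connected, not a join, and CFS---the chain of squares above is a connected family of $4$--cycles whose support is all of $\Gamma$---and such graphs yield groups of quadratic divergence. The main obstacle, and indeed the real content of the construction, is exactly the verification of the second paragraph: one must thread the needle so that the added edges preserve the thick network of squares (keeping divergence \emph{at most} quadratic and precluding relative hyperbolicity) while still failing to make $\Gamma$ a join (keeping divergence \emph{at least} quadratic). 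Both are guaranteed by the single combinatorial hypothesis that no two points of $\PP$ share a common join.
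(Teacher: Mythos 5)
Your proposal is correct in substance and rests on exactly the same combinatorial core as the paper's proof---the ladder of induced squares $\{a_{i-1},b_{i-1},a_{i},b_{i}\}$ surviving the passage from $\Gamma_{n}$ to $\Gamma$ because each square meets $\PP$ in at most one point---but you assemble it differently. The paper packages these squares as the $\mathcal{CFS}$ property and then quotes \cite[Theorem~1.1]{DaniThomas:divcox} and \cite[Proposition~3.1]{BehrstockHagenSusseFalgas:StructureRandom} for ``$\mathcal{CFS}$ and not a join implies exactly quadratic divergence,'' deducing non-relative-hyperbolicity from Sisto's theorem that relatively hyperbolic groups have at least exponential divergence; you instead make the thickness explicit (a network of wide $D_\infty\times D_\infty$ special subgroups chained along the infinite overlaps $\langle a_{i},b_{i}\rangle$), which is in fact the mechanism inside the cited BHFS proposition, and you obtain non-relative-hyperbolicity from thickness via \cite{BehrstockDrutuMosher:thick}. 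Your lower bound (degree count showing $\Gamma$ is not a join, then the CAT(0) linear/quadratic divergence gap) also differs from the paper, which absorbs the lower bound into the quoted theorems; your closing ``equivalently'' paragraph via the Dani--Thomas criterion is essentially the paper's own argument. Two caveats. First, your ``elementary observation'' that a common neighbour already yields a common join uses the degenerate join $\{x,y\}\ast\{v\}$, which the paper's convention explicitly excludes (both parts must have diameter at least $2$); the conclusion you need is nevertheless true here, since $\link(a_{i})=\link(b_{i})$ means a common neighbour at level $i$ places both vertices in the genuine join $\{a_{i},b_{i}\}\ast\{a_{i-1},b_{i-1},a_{i+1},b_{i+1}\}$---equivalently, the paper's stated fact that $a_{i},a_{j}$ share a join iff $|i-j|\le 2$ already forces points of $\PP$ to lie at levels at least $3$ apart, which is all you use. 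Second, two attributions are off: the quadratic upper bound for spaces thick of order one is due to Behrstock--Dru\c{t}u (divergence of a thick group of order $k$ is at most polynomial of degree $k+1$), not to \cite{BehrstockDrutuMosher:thick}, which supplies only the non-relative-hyperbolicity; and the step ``$\Gamma$ not a join $\Rightarrow$ $W$ not wide'' is not immediate and wants a citation (for triangle-free graphs it is exactly the linear case of Dani--Thomas)---though since your alternative packaging through their theorem covers this, neither point is a genuine gap.
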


\begin{proof}
    It is easily seen that the graph $\Gamma_{n}$ 
    has the property that each vertex is contained in at least one induced 
    square. It is also easy to verify that given any pair of 
    induced squares $S$, $S'$ in $\Gamma_{n}$, there exists a sequence of 
    induced squares $S=S_{1},S_{2},S_{3},\ldots, S_{k}=S'$ where for each 
    $1\leq i<k$ the squares $S_{i}$ and $S_{i+1}$ share 3 vertices in 
    common. This property, that there are enough squares to chain together 
    any pair of points, is called $\mathcal{CFS}$; it was 
    introduced in  
    \cite{DaniThomas:divcox} and studied further in 
    \cite{BehrstockHagenSusseFalgas:StructureRandom, Levcovitz:CFSdiv}.

    Since $\Gamma$ has the same vertex 
    set as $\Gamma_{n}$, and every induced square in $\Gamma_{n}$ is 
    still induced in $\Gamma$, it follows that $\Gamma$ also has the 
    $\mathcal{CFS}$ property.

    Given any graph with the $\mathcal{CFS}$ property and which is not a 
    join, the associated 
    right-angled Coxeter group has exactly quadratic 
    divergence, see \cite[Theorem~1.1]{DaniThomas:divcox} and 
    \cite[Proposition~3.1]{BehrstockHagenSusseFalgas:StructureRandom}. 
    
    The second statement in the proposition follows from the fact that any 
    relatively hyperbolic group has divergence which is at least 
    exponential 
    \cite[Theorem~1.3]{Sisto:metricrelhyp}.
\end{proof}

\subsection{Stable surface subgroups}

An undistorted subgroup is said to be \emph{(quasi-geodesically)
stable} if each pair of points in that subgroup are connected by uniformly Morse
quasigeodesics, see \cite{DurhamTaylor:stability}.

\begin{prop}\label{stablesurface}
    $W$ 
    contains a closed hyperbolic surface subgroup which is stable.
\end{prop}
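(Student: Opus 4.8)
The plan is to produce the surface subgroup inside the special subgroup cut out by the cycle $\PP$, and then to establish stability by verifying that this subgroup is undistorted and Morse. First I would check that $\PP$ spans an induced $m$-cycle in $\Gamma$. By construction no two of the $p_i$ lie in a common join of $\Gamma_n$, so in particular no two are adjacent in $\Gamma_n$; the only edges added among the $p_i$ in passing to $\Gamma$ are the cycle edges $p_ip_{i+1}$ (indices mod $m$). Hence the subgraph of $\Gamma$ induced on $\PP$ is exactly the cycle $C_m$ with $m\geq 5$, and the special subgroup $W_{C_m}\leq W$ is the right-angled Coxeter group on an $m$-cycle. Since consecutive generators commute while non-consecutive ones generate infinite dihedral groups, $W_{C_m}$ is precisely the group generated by the reflections in the sides of a right-angled hyperbolic $m$-gon, a polygon which exists exactly because $m\geq 5$. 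Thus $W_{C_m}$ is a cocompact Fuchsian group, so it is virtually a closed hyperbolic surface group; fixing a torsion-free finite-index subgroup $H\leq W_{C_m}$ (e.g.\ by Selberg's lemma, since Coxeter groups are linear) gives the fundamental group of a closed hyperbolic surface. It then remains to show that $H$ is stable in $W$.

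Because $H$ has finite index in $W_{C_m}$, it is undistorted in, and quasi-isometric to, $W_{C_m}$; hence $H$ is stable in $W$ as soon as $W_{C_m}$ is, and I would prove the latter. Undistortion of $W_{C_m}$ is standard: as a special subgroup it acts cocompactly on a combinatorially convex subcomplex $Y$ of the Davis complex $\Sigma$ of $W$, so $Y\hookrightarrow\Sigma$ is an isometric, in particular quasi-isometric, embedding, and $Y$ is quasi-isometric to $\hyperbolic^2$. By the definition of stability recalled above it then suffices to show that $W_{C_m}$ is \emph{Morse} in $W$, i.e.\ that quasigeodesics between its points are uniformly Morse; for a convex subcomplex this is equivalent to $Y$ being a contracting subcomplex of $\Sigma$.

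This contracting property is the heart of the matter. Flats, and more generally product regions, in $\Sigma$ come from joins of $\Gamma$: a join $A\ast B$ with both $W_A$ and $W_B$ infinite produces a product region $W_A\times W_B$, and a bi-infinite geodesic of $Y$ can shadow such a region only along syllables lying in a common join. The only bi-infinite geodesics of $Y$ arise from \emph{non}-adjacent pairs of cycle vertices (adjacent ones commute and generate finite groups), so the relevant obstruction concerns non-adjacent pairs. I would reduce the contracting property to the following combinatorial fact, which the construction is arranged to guarantee: \emph{no two vertices of $\PP$ that are non-adjacent in $C_m$ lie in a common join of $\Gamma$}. Two such vertices have no common neighbor in $\Gamma_n$, and the cycle edges added in forming $\Gamma$ give them at most one common neighbor, too few to build an infinite second join factor; thus they cannot sit together in a product region. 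Consequently any sub-segment of a geodesic of $Y$ that stays near a product region uses essentially a single non-commuting pair of generators and so has uniformly bounded length, which yields the contracting property. Being undistorted and Morse, $W_{C_m}$ is stable, and therefore so is $H$.

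The main obstacle is this final reduction: converting the pairwise ``no common join'' property into a genuine uniform bound on how long a product region can shadow a geodesic of $Y$, since one must also control interactions with three or more generators at once and rule out quasiflats that are not literal product subcomplexes. An alternative that sidesteps the hands-on flat analysis is to use the hierarchically hyperbolic structure of $W$: by the stability criterion of \cite{AbbottBehrstockDurham}, an undistorted subgroup of a hierarchically hyperbolic group is stable precisely when its orbit map to the top-level hyperbolic space is a quasi-isometric embedding, and the same no-common-join property is exactly what forces the orbit of $W_{C_m}$ to embed quasi-isometrically there.
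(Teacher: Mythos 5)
Your setup (the induced $m$-cycle on $\PP$, the right-angled reflection group of a hyperbolic $m$-gon, Selberg's lemma to extract the closed surface group, undistortion via convexity of special subgroups in the Davis complex) matches the paper, but both of your routes to stability have a genuine gap, and in the second case an actual error. Your primary route reduces everything to showing that the convex subcomplex $Y$ is contracting, and you candidly admit that the crucial step --- converting the pairwise ``no common join'' condition into a uniform bound on how long any quasiflat, not just a literal product subcomplex, can shadow a geodesic of $Y$ --- is left unproved. That step is the entire content of the proposition: a pairwise condition on generators does not by itself control interactions among three or more generators or quasiflats that are not product subcomplexes, and you neither prove this nor invoke a citable criterion (a characterization of this kind does exist, e.g.\ in work of Tran \cite{Tran:loxcox} and its successors, but you do not appeal to one in a form that applies).

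Your fallback via \cite{AbbottBehrstockDurham} contains a false step, and it is exactly the pitfall the paper's proof is built to avoid. You assert that stability of an undistorted subgroup is equivalent to its orbit quasi-isometrically embedding in ``the top-level hyperbolic space,'' and that the no-common-join property forces this for $W_{C_m}$. But in the standard hierarchically hyperbolic structure on a right-angled Coxeter group \cite{BehrstockHagenSisto:HHS1}, the nest-maximal curve graph is the contact graph, which is a quasi-tree \cite{Hagen:quasi_arboreal}; since $W_{C_m}$ is quasi-isometric to $\hyperbolic^2$, its orbit \emph{cannot} quasi-isometrically embed there, so your claimed conclusion fails in the only structure you have in hand. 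The criterion in \cite[Theorem~B]{AbbottBehrstockDurham} quantifies over \emph{some} hierarchically hyperbolic structure, and the paper's proof proceeds by invoking \cite[Theorem~3.11]{AbbottBehrstockDurham} to modify the standard structure: one removes every domain that has no orthogonal domain with infinite-diameter curve graph (augmenting the top-level curve graph accordingly), so that it suffices to check bounded projections only to domains admitting an infinite-diameter orthogonal complement. The no-common-join property of $\PP$ enters precisely there: the domains to which $H$ projects unboundedly all lie in the subcomplex for $H$, and anything orthogonal to them has uniformly bounded curve graph, so they survive no obstruction in the modified structure. This structure-modification step is the key idea missing from your proposal.
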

\begin{proof} 
    Recall that in any right-angled Coxeter group an 
    induced subgraph yields a subgroup isomorpic to the right-angled Coxeter group 
    of the associated subgraph.  
    Also, note that the right-angled Coxeter group associated to a cycle of 
    length at least~5 is a 2--dimensional hyperbolic orbifold group.
    Thus, the subgraph spanned by $\PP$, which is a cycle of 
    length $m\geq 5$, yields a subgroup, $H$, which is isomorphic 
    to a 2--dimensional hyperbolic orbifold group. 
    
%     Recall that, associated to any $CAT(0)$ cube complex, Hagen
%     defined an associated hyperbolic metric space called the
%     \emph{contact graph}, see \cite{Hagen:quasi_arboreal}. Let $\fontact 
%     W$ denote the contact graph associated to the universal cover of 
%     the Davis complex of $W$. 
%     

%     The remainder of the argument can be proved explicitly, but, 
%     for brevity, we instead cite some recent general results which 
%     provide the results we need.
    
    By construction, the subgraph
    $\PP$ doesn't contain any pair of non-adjacent vertices in a
    common join of $\Gamma$ (by assumption we use the term 
    join to mean non-degenerate join,  
    in the sense that both parts of the join have
    diameter at least 2); thus no special subgroup of $H$ is a direct 
    factor in a 
    non-hyperbolic special subgroup of $W$.
 
    It is proven in \cite[Theorem~B]{AbbottBehrstockDurham} that, a
    subgroup of a hierarchically hyperbolic space is stable if and
    only if it is undistorted and has uniformly bounded projections to
    the curve graph of every proper domain in some hierarchically
    hyperbolic structure; in turn, this property is equivalent to the
    orbit of the subgroup being quasi-isometrically embedded in the
    curve graph of the nest-maximal domain.  
    
    A hierarchically
    hyperbolic structure on right-angled Coxeter groups was built in
    \cite{BehrstockHagenSisto:HHS1}; in that structure the curve
    graphs are contact graphs which were first defined in
    \cite{Hagen:quasi_arboreal} where it was proven they are all quasi-trees.  In
    the present context, a domain can be thought of as a certain convex
    subcomplex of the Davis complex arising from special subgroups and
    orthogonality holds when the Davis complex contains the direct
    product of a given such pair. 

    Since a closed surface group can not be quasi-isometrically embedded 
    in a quasi-tree, to verify the proposition 
    a different hierarchically hyperbolic structure 
    must be used; for this we now rely on a method developed in 
    \cite{AbbottBehrstockDurham} for modifying structures.
    The construction given in 
    \cite[Theorem~3.11]{AbbottBehrstockDurham} modifies the 
    structure, by removing certain domains and augmenting the curve 
    graphs associated to domains in which they are nested. Indeed, 
    passing to the new structure and back, it follows that to verify a 
    given subgroup has uniformly bounded projections in a particular 
    structure, it suffices to 
    verify 
    that the subgroup has uniformly bounded projections to the set of domains
    $\mathcal W$ with the property that for each $W\in \mathcal W$,
    there exists a domain $U$ which is orthogonal to $W$ and with
    infinite diameter curve graph (since all other domains will be 
    removed in the new structure while only changing the curve graph  
    of the nest-maximal domain).  
    
    In the standard hierarchically hyperbolic structure the only
    domains to which $H$ has unbounded projections are contained in
    the subcomplex of the Davis complex associated to the special
    subgroup $H$.  As noted above, since no pair of vertices in $\PP$
    are contained in a common join in $\Gamma$, these domains all have
    the property that any domains orthogonal to one of them must have
    uniformly bounded diameter.  Thus, applying the results of
    \cite{AbbottBehrstockDurham} just discussed, it follows that $H$ is
    stable.

    Now take a cover of the orbifold to get the desired 
    closed hyperbolic surface.
\end{proof}

\begin{rem}\label{rem:thanksref} We note that in the subgroup $H$ constructed in 
    Proposition~\ref{stablesurface}, it follows easily from 
    \cite[Proposition~3.3]{KapovichLeeb:3manifolds} 
    (or, more explicitly, from \cite[Theorem~1.1]{Tran:loxcox}), that 
    each infinite order element in $H$ acts 
    as a rank-one isometry of the Davis complex of $W$ and is thus 
    Morse. To see that such an argument alone is not enough to prove 
    stability of 
    the subgroup, we note that in  
    \cite{OlshanskiiOsinSapir:lacunary} they construct 
    lacunary hyperbolic groups which provide an obstruction. In 
    particular, in \cite[Theorem~1.12]{OlshanskiiOsinSapir:lacunary} 
    it is proven that there exist  
    infinite finitely generated non-hyperbolic groups in which every 
    proper subgroup is infinite cyclic and generate uniformly Morse 
    quasigeodesics. 
\end{rem}

The next result follows from Proposition~\ref{stablesurface} and 
\cite[Proposition~4.2]{Cordes:Morseboundary}.

\begin{cor}\label{embeddedcircle}
    The right-angled Coxeter group $W$ 
    contains a topologically embedded circle in its Morse boundary.
\end{cor}

\section{Applications}

\subsection{Morse boundaries}

Charney and Sultan introduced a boundary for CAT(0) groups which 
captures aspects of the negative curvature of the group  
\cite{CharneySultan}. Their construction was then generalized by 
Cordes to a framework which exists for 
all finitely generated groups \cite{Cordes:Morseboundary}; in this 
general context it is known as the \emph{Morse boundary}. 
Charney and Sultan built examples 
of relatively hyperbolic right-angled Coxeter groups whose 
boundaries are not totally disconnected \cite{CharneySultan}. More 
generally, it is now known that for hyperbolic groups, the 
Morse boundary coincides with the hyperbolic boundary 
\cite[Main Theorem (3)]{Cordes:Morseboundary}; using this it is easy to produce many examples of 
hyperbolic and relatively hyperbolic right-angled Coxeter groups 
whose boundary are not totally disconnected. 

On the other hand, the Morse boundary of any right-angled Artin group
$A_\Gamma$ is totally disconnected.  The two-dimensional case of this
is implicit in \cite{CharneySultan}.  In general, this fact follows from
the fact that the contact graph has
totally disconnected boundary (since it is a quasi-tree) and 
\cite[Theorem~F]{CordesHume:Stability}. To see this recall that 
\cite[Theorem~F]{CordesHume:Stability} provides a continuous
map from the Morse boundary of $A_\Gamma$ to the boundary of the
contact graph of the universal cover of the Salvetti complex of
$A_\Gamma$; at which point the result follows since 
Morse geodesic rays lying at infinite Hausdorff distance
cannot fellow-travel in the contact graph, so this continuous boundary
map is injective.

Accordingly, Ruth Charney and Alessandro Sisto 
raised the question of whether outside of the relatively 
hyperbolic setting, right-angled Coxeter groups all have totally 
disconnected Morse boundary. The group $W$ constructed 
above shows the answer is no, since it is not relatively hyperbolic 
by Proposition~\ref{quaddiv} and  
its boundary is not totally 
disconnected, by Corollary~\ref{embeddedcircle}.

\subsection{Stable subgroups}

Examples of stable subgroups are known both in the mapping class 
group \cite{Behrstock:asymptotic, DurhamTaylor:stability} and in right-angled Artin groups
\cite{KoberdaMangahasTaylor}. In both of these classes, 
all known examples of stable subgroups are 
virtually free; in the relatively hyperbolic setting on the other 
hand, it is easy to construct one-ended stable subgroups. 
Sam Taylor asked whether there exist non-relatively hyperbolic 
groups with one-ended stable subgroups. The example $W$ is a  
non-relatively hyperbolic 
group with one-ended stable subgroups by 
Proposition~\ref{quaddiv} and 
Proposition~\ref{stablesurface}.

\subsection{Commensurability}

A well-known construction of Davis--Januszkiewicz 
\cite{DavisJanuszkiewicz} shows that every right-angled Artin group  
is commensurable to some right-angled Coxeter group. 
The following is a well-known problem:

\begin{question}\label{question:raagracg}
    Which right-angled Coxeter groups are commensurable to 
    right-angled Artin groups?
\end{question}

It is known that 
any right-angled Artin groups either has divergence which is linear 
(if it is a direct product) or quadratic, see 
\cite{BehrstockCharney} or \cite{ABDDY:Pushing}.
Since divergence 
is invariant under quasi-isometry, and hence under commensurability as 
well, this puts a constraint on the 
answer to Question~\ref{question:raagracg}. Several people have 
raised the question of whether every right-angled Coxeter group with 
quadratic divergence is quasi-isometric to some right-angled Artin 
group. The group $W$ shows that the answer is no, since \cite[Main 
Theorem (2)]{Cordes:Morseboundary} proves that the Morse boundary 
is invariant under quasi-isometries, but Propositions~\ref{quaddiv} 
and~\ref{stablesurface} show that 
the group $W$ is a right-angled Coxeter group with quadratic 
divergence whose Morse boundary 
contains an embedded circle, while the Morse boundary of any 
right-angled Artin group is totally disconnected.

\section{Further questions}

In \cite{BehrstockHagenSusseFalgas:StructureRandom} it was established
that for a large range of density functions that asymptotically almost
surely the random graph yields a right-angled Coxeter group with
quadratic divergence.  More generally, it is known that among  
$n$--vertex graphs with density greater than $1/n$ and bounded away
from 1, that asymptotically almost surely any random graph of this 
type will contain a
large induced polygon.  Accordingly, we expect that the proof of
Corollary~\ref{embeddedcircle} could be used to verify:

\begin{conj}
    For any density greater than $1/n$ and bounded away from 1,  
    asymptotically almost surely the random right-angled Coxeter 
    group contains circles in its Morse boundary. In particular, it
    is not virtually a right-angled Artin group.
\end{conj}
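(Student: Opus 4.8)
The plan is to reduce the conjecture to a single combinatorial property of the random presentation graph and then to reuse the argument behind Corollary~\ref{embeddedcircle} verbatim. First I would invoke the quoted fact that, throughout the stated density range, the random graph asymptotically almost surely contains a large induced cycle $C$ of length at least $5$; as in Proposition~\ref{stablesurface}, the special subgroup $H = W_C$ is then a $2$--dimensional hyperbolic orbifold group, whose Gromov boundary is a circle. Granting that $H$ is stable, Corollary~\ref{embeddedcircle} (via \cite[Proposition~4.2]{Cordes:Morseboundary}) yields a topologically embedded circle in the Morse boundary of the random group. The final clause is then automatic: the Morse boundary is a quasi-isometry invariant by \cite[Main Theorem (2)]{Cordes:Morseboundary}, whereas every right-angled Artin group has totally disconnected Morse boundary, so a group whose Morse boundary contains a circle cannot be virtually a right-angled Artin group.

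Everything therefore hinges on the stability of $H$, and tracing the proof of Proposition~\ref{stablesurface} pins down exactly what is required. By \cite[Theorem~B]{AbbottBehrstockDurham} together with the structure modification of \cite[Theorem~3.11]{AbbottBehrstockDurham}, the subgroup $H=W_C$ projects unboundedly only to domains supported on $C$, and the domains that can land in the obstructing collection $\mathcal{W}$ are the quasi-line domains $U_{ij}$ carrying the infinite dihedral subgroups $\langle p_i,p_j\rangle$ for non-adjacent vertices $p_i,p_j$ of $C$ (a larger support only shrinks the orthogonal complement, so the pairs are the binding case). Such a $U_{ij}$ admits an infinite-diameter orthogonal domain precisely when $p_i$ and $p_j$ have two non-adjacent common neighbours---equivalently, when they lie in a common non-degenerate join. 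Thus, since special subgroups of right-angled Coxeter groups are undistorted, stability of $H$ is \emph{equivalent} to the isolation condition that no two non-adjacent vertices of $C$ share a pair of non-adjacent common neighbours, which is exactly the property built in by hand when $\Gamma$ was constructed.

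The main obstacle, then, is purely probabilistic: to produce asymptotically almost surely an induced cycle of length at least $5$ that is isolated in this strong sense. In the sparse, locally tree-like regime just above the threshold $p = 1/n$ I expect this to be routine---short cycles are sparse and their second neighbourhoods are typically disjoint, so a second-moment argument should simultaneously locate the induced polygon and, by a union bound over the $O(|C|^{2})$ non-adjacent pairs, exclude the offending common neighbours. The genuinely hard part is the denser end of the range: once the density is bounded below by a positive constant, a short computation shows that asymptotically almost surely \emph{every} non-adjacent pair of vertices already has two non-adjacent common neighbours, so no induced cycle is isolated and this particular route to stability appears to break down. Bridging the two regimes---either by selecting a subtler stable surface inside the abundant product structure, or by a stability criterion that tolerates controlled interaction with orthogonal domains---is where the real work lies, and is presumably the ``appropriate modifications'' anticipated in the introduction.
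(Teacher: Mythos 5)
The statement you were asked to prove is a \emph{conjecture}: the paper contains no proof of it, only the one-sentence heuristic immediately preceding it, namely that random graphs in this density range asymptotically almost surely contain a large induced polygon and that ``the proof of Corollary~\ref{embeddedcircle} could be used'' to conclude. Your reduction reconstructs exactly that intended strategy: find an induced cycle $C$ of length at least $5$, run the argument of Proposition~\ref{stablesurface} to get a stable hyperbolic orbifold (hence, after passing to a cover, surface) subgroup $W_C$, apply \cite[Proposition~4.2]{Cordes:Morseboundary} to embed a circle in the Morse boundary, and deduce the last clause from quasi-isometry invariance of the Morse boundary together with total disconnectedness of the Morse boundary of every right-angled Artin group. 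Your identification of the key hypothesis --- that no non-adjacent pair of vertices of $C$ has two non-adjacent common neighbours, i.e.\ lies in a common non-degenerate join --- is precisely the condition engineered by hand in the construction of $\Gamma$, and your observation that stability of $W_C$ is actually \emph{equivalent} to this isolation condition (since the special dihedral subgroup on a bad pair is convex, hence undistorted, in $W_C$, yet sits inside a product of two infinite special subgroups and so is not Morse) is correct and sharper than anything stated in the paper.

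That said, what you have written is not a proof, as you yourself acknowledge, and the obstruction you flag is genuine --- in fact it bites well before the density is bounded below by a constant. The common neighbourhood of a fixed pair is concentrated near $np^{2}$, so once $np^{2}\gg\log n$ a union bound over all pairs shows that asymptotically almost surely \emph{every} non-adjacent pair of vertices has two non-adjacent common neighbours; hence no induced cycle at all satisfies the isolation condition, and the route through special cycle subgroups fails on most of the conjectured range $(1/n,\,1-\epsilon)$, not merely at its dense end. So even if your sparse-regime second-moment argument were carried out in full, it would establish only a fragment of the conjecture, and the bridging step you point to (a subtler stable subgroup, or a stability criterion tolerating unbounded orthogonal domains with controlled projections) is exactly the missing idea. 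Since the paper offers no proof to compare against, the fair verdict is: your proposal matches the paper's intended approach, correctly and usefully delimits where that approach can work, and leaves the conjecture open precisely where the paper does.
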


The prevalence of right-angled Coxeter groups with quadratic 
divergence makes the following an appealing (but likely very 
difficult) question:
\begin{question}\label{question:racgclassification}
    Classify right-angled Coxeter groups with quadratic divergence up 
    to quasi-isometry. Classify them up to commensurability.
\end{question}

\medskip

The proof of Proposition~\ref{stablesurface} and 
Remark~\ref{rem:thanksref}  
indicate that there are some non-trivial constraints on 
what the set of domains could be in any potential 
hierarchically hyperbolic structure on a lacunary hyperbolic group. 
Accordingly we ask:

\begin{question}\label{question:lacunaryHHS}
    Is a lacunary hyperbolic group hierarchically hyperbolic if and 
    only if it is hyperbolic?   
\end{question}

Further, in light of Proposition~\ref{stablesurface} and Remark~\ref{rem:thanksref} 
it would be interesting if the following was true in general or 
under some moderate hypotheses:

\begin{question}\label{question:morseHHS}
    Let $G$ be a hierarchically hyperbolic group with a subgroup 
    $H$. If all infinite cyclic subgroups of $H$ are uniformly 
    Morse, does that imply that $H$ is stable?
\end{question}

%%%%%%%%%%%%%%%%%%%%%%%%%%%%%%%%
%\bibliographystyle{alpha} %
% \bibliographystyle{plain} %
% \bibliographystyle{/Users/jason/Documents/Math/zTEX.macros.symbols/ctmbib} %                       
% \bibliography{/Users/jason/Documents/Math/zTEX.macros.symbols/behrstock}

\begin{thebibliography}{BFRHS}

\bibitem[ABD]{AbbottBehrstockDurham}
Carolyn Abbott, Jason Behrstock, and Matthew~G Durham.
\newblock {Largest acylindrical actions and stability in hierarchically
  hyperbolic groups}.
\newblock \textsc{arXiv:1705.06219}.

\bibitem[ABD{\etalchar{+}}]{ABDDY:Pushing}
A.~Abrams, N.~Brady, P.~Dani, M.~Duchin, and R.~Young.
\newblock {Pushing fillings in right-angled {A}rtin groups}.
\newblock {\em J. Lond. Math. Soc. (2)} {\bf 87} (2013), 663--688.

\bibitem[Beh]{Behrstock:asymptotic}
J.~Behrstock.
\newblock {Asymptotic geometry of the mapping class group and {T}eichm\"{u}ller
  space}.
\newblock {\em Geometry \& Topology} {\bf 10} (2006), 1523--1578.

\bibitem[BC]{BehrstockCharney}
J.~Behrstock and R.~Charney.
\newblock {{Divergence and quasimorphisms of right-angled Artin groups}}.
\newblock {\em Math. Ann.} {\bf 352} (2012), 339--356.

\bibitem[BDM]{BehrstockDrutuMosher:thick}
J.~Behrstock, C.~Dru\c{t}u, and L.~Mosher.
\newblock {Thick metric spaces, relative hyperbolicity, and quasi-isometric
  rigidity}.
\newblock {\em Math. Ann.} {\bf 344} (2009), 543--595.

\bibitem[BFRHS]{BehrstockHagenSusseFalgas:StructureRandom}
J.~Behrstock, V.~Falgas-Ravry, M.~Hagen, and T.~Susse.
\newblock {Global structural properties of random graphs}.
\newblock {\em Int. Math. Res. Not.} (2016).
\newblock To appear.

\bibitem[BH]{BehrstockHagen:cubulated1}
J.~Behrstock and M.F.\ Hagen.
\newblock {Cubulated groups: thickness, relative hyperbolicity, and simplicial
  boundaries}.
\newblock {\em Geometry, Groups, and Dynamics} {\bf 10} (2016), 649--707.

\bibitem[BHS]{BehrstockHagenSisto:HHS1}
J.~Behrstock, M.F. Hagen, and A.~Sisto.
\newblock {{Hierarchically hyperbolic spaces I: curve complexes for cubical
  groups}}.
\newblock {\em Geometry \& Topology}, 21:1731--1804, 2017.

\bibitem[Cha]{Charney:RAAGsurvey}
Ruth Charney.
\newblock {An introduction to right-angled {A}rtin groups}.
\newblock {\em Geom. Dedicata} {\bf 125} (2007), 141--158.

\bibitem[CS]{CharneySultan}
Ruth Charney and Harold Sultan.
\newblock {Contracting boundaries of {$\rm CAT(0)$} spaces}.
\newblock {\em J. Topol.} {\bf 8} (2015), 93--117.

\bibitem[Cor1]{Cordes:Morseboundary}
Matthew Cordes.
\newblock {Morse boundaries of proper geodesic metric spaces}.
\newblock \textsc{arXiv:1502.04376}.%, 2015.

\bibitem[Cor2]{Cordes:Morsesurvey}
Matthew Cordes.
\newblock {A survey on Morse boundaries \& stability}.
\newblock \textsc{arXiv:1704.07598}.%, 2017.

\bibitem[CH]{CordesHume:Stability}
Matthew Cordes and David Hume.
\newblock {Stability and the Morse boundary}.
\newblock {\em To appear in Groups, Geometry, and Dynamics} (2016).

\bibitem[DaT]{DaniThomas:divcox}
Pallavi Dani and Anne Thomas.
\newblock {Divergence in right-angled {C}oxeter groups}.
\newblock {\em Trans. Amer. Math. Soc.} {\bf 367} (2015), 3549--3577.

\bibitem[DJ]{DavisJanuszkiewicz}
Michael~W. Davis and Tadeusz Januszkiewicz.
\newblock {Right-angled {A}rtin groups are commensurable with right-angled
  {C}oxeter groups}.
\newblock {\em J. Pure Appl. Algebra} {\bf 153} (2000), 229--235.

\bibitem[DT]{DurhamTaylor:stability}
Matthew~Gentry Durham and Samuel~J. Taylor.
\newblock {Convex cocompactness and stability in mapping class groups}.
\newblock {\em Algebr. Geom. Topol.} {\bf 15} (2015), 2839--2859.

\bibitem[Hag1]{Hagen:quasi_arboreal}
Mark~F. Hagen.
\newblock {Weak hyperbolicity of cube complexes and quasi-arboreal groups}.
\newblock {\em J. Topol.} {\bf 7} (2014), 385--418.

\bibitem[KL]{KapovichLeeb:3manifolds}
M.~Kapovich and B.~Leeb.
\newblock {{$3$}-manifold groups and nonpositive curvature}.
\newblock {\em Geom. Funct. Anal.} {\bf 8} (1998), 841--852.

\bibitem[KMT]{KoberdaMangahasTaylor}
Thomas Koberda, Johanna Mangahas, and Samuel~J Taylor.
\newblock {The geometry of purely loxodromic subgroups of right-angled Artin
  groups}.
\newblock \textsc{arXiv:1412.3663}.%, 2014.

\bibitem[Lev]{Levcovitz:CFSdiv}
Ivan Levcovitz.
\newblock {{Divergence of CAT(0) Cube Complexes and Coxeter Groups}}.
\newblock {arXiv:1611.04378}.%, 2016.

\bibitem[OOS]{OlshanskiiOsinSapir:lacunary}
Alexander~Yu. Ol$'$shanskii, Denis~V. Osin, and Mark~V. Sapir.
\newblock {Lacunary hyperbolic groups}.
\newblock {\em Geom. Topol.} {\bf 13}(2009), 2051--2140.
\newblock With an appendix by Michael Kapovich and Bruce Kleiner.

\bibitem[Sis]{Sisto:metricrelhyp}
A.~Sisto.
\newblock {{On metric relative hyperbolicity}}.
\newblock \textsc{arXiv:1210.8081}.%, 2012.

\bibitem[Tat2]{Hagen:tat}
Mark~F. Hagen.
\newblock {Tattoo}.
\newblock In preparation.

\bibitem[Tra]{Tran:loxcox}
Hung~Cong Tran.
\newblock {{Purely loxodromic subgroups in right-angled Coxeter groups}}.
\newblock \textsc{arXiv:1703.09032}.

\end{thebibliography}
%%%%%%%%%%%%%%%%%%%%%%%%%%%%%%%%

\newcommand{\etalchar}[1]{$^{#1}$}

\end{document}